\newtheorem{theorem}{Theorem}
\newtheorem{lemma}[theorem]{Lemma}
\newtheorem*{corollary*}{Corollary}
\theoremstyle{definition}
\theoremstyle{remark}
\newcommand{\Q}{{\mathbb{Q}}}
\newcommand{\wenvert}[1]{\left\lvert\left\lvert#1\right\rvert\right\rvert}
\newcommand{\norml}[1]{\wenvert{#1}_1}
\newcommand{\floor}[1]{\left\lfloor#1\right\rfloor}
\newcommand{\Mod}[1]{\ \left(\mathrm{mod}\ #1\right)}
\let\abs=\envert
\begin{document}
\title[Gauss-Kra\"{i}tchik formula for cyclotomic polynomials]
{On Gauss-Kra\"{i}tchik formula for cyclotomic polynomials via symmetric functions}
\author[Tomohiro Yamada]{Tomohiro Yamada*}
\address{Center for Japanese language and culture, Osaka University,
562-8678, 3-5-10, Semba-Higashi, Minoo, Osaka, Japan}
\email{tyamada1093@gmail.com}

\subjclass[2020]{Primary 11B83; Secondary 05A19, 05E05, 11C08, 11R09, 11R18, 11T22, 12E05}
\keywords{Gauss-Kra\"{i}tchik formula, cyclotomic polynomial, symmetric function, Girard-Newton identity}

\begin{abstract}
We give explicit upper bounds for coefficients of polynomials appearing in Gauss-Kra\"{i}tchik formula
for cyclotomic polynomials.
We use a certain relation between elementary symmetric polynomials and power sums polynomials.
\end{abstract}

\maketitle

\section{Introduction}\label{intro}

The $d$-th cyclotomic polynomial
\begin{equation}
\Phi_d(X)=\prod_{1\leq k\leq d, \gcd(k, d)=1} (X-\zeta_d^k),
\end{equation}
where $\zeta_d=e^{2\pi i/d}$,
is the monic polynomial of integer coefficients whose roots are exactly $d$-th primitive roots of unity.
We expand $\Phi_d(X)$ into
\begin{equation}
\Phi_d(X)=\sum_{k=0}^{\varphi(d)} c_{d, k} X^{\varphi(d)-k},
\end{equation}
where $\varphi(d)$ is the Euler totient of $d$.

Many results are known for coefficients $c_{d, k}$ of cyclotomic polynomials.
We begin by noting that the coefficients $c_{d, k}$'s are palindromic in the sense that
$c_{d, k}=c_{d, \varphi(d)-k}$ for any $d\geq 2$ and $0\leq k\leq \varphi(d)$.
Bachman \cite{Bac} proved that
\begin{equation}
\log \max_d \abs{c_{d, k}}=\frac{Ck^\frac{1}{2}}{\log^{1/4} k}\left(1+O\left(\frac{\log\log k}{\log^\frac{1}{2} k}\right)\right)
\end{equation}
for a constant $C>0$ explicitly given by a complicated expression.
Bateman \cite{Bat} proved that
\begin{equation}
\log \max_k \abs{c_{d, k}}<\frac{\tau(d)\log d}{2}<\exp \frac{(\log 2+o(1))\log d}{\log\log d}
\end{equation}
for sufficiently large integers $d$ and Vaughan \cite{Vau} proved that
\begin{equation}
\log \log \max_k \abs{c_{d, k}}>\frac{\log 2 \log d}{\log\log d}
\end{equation}
for infinitely many positive integers $d$,
where $\tau(d)$ denotes the number of divisors of $d$.
Sanna \cite{San} gave a survey for results on coefficients of cyclotomic polynomials.

Gauss' formula in Sections 356--357, pp. 633--638 of \textit{Disquitiones Arithmeticae}
(or pp. 436--440 of the translation \cite{Gau}) states that if $d$ is an odd prime, then we can have
\begin{equation}\label{eq11}
4\Phi_d(X)=\Psi_d(X)^2-D\Xi_d(X)^2,
\end{equation}
where $\Psi_d(X)$ is a polynomial of degree $(d-1)/2$ and of integer coefficients beginning with two,
$\Xi_d(X)$ is a polynomial of degree $(d-3)/2$ and of integer coefficients beginning with one,
and $D=(-1)^{(d-1)/2} d$.
Later, Kra\"{i}tchik proved that \eqref{eq11} holds for any odd squarefree $d$
with $\Psi_d(X)$ of degree $\varphi(d)/2$ and $\Xi_d(X)$ of degree $\varphi(d)/2-1$
in Tome I, pp. 124--129 of \cite{Kra} (see also p.330 of \cite{Rie}).
Tome II, pp. 2--4 of \cite{Kra} gives coefficients of $\Psi_d(X)$ and $\Xi_d(X)$ for $5\leq d\leq 101$
and Table 33 in pp. 445--451 of \cite{Rie} extends this for $5\leq d\leq 149$.
For example, we have
\begin{equation}
\begin{split}
& \Psi_5(X)=2X^2+X+2, ~ \Xi_5(X)=X, \\
& \Psi_7(X)=2X^3+X^2-X-2, ~ \Xi_7(X)=X^2+X, \\
& \Psi_{11}(X)=2X^5+X^4-2X^3+2X^2-X-2, ~ \Xi_{11}(X)=X^4+X, \\
& \Psi_{13}(X)=2X^6+X^5+4X^4-X^3+4X^2+X+2, \\
& \Xi_{13}(X)=X^5+X^3+X, \ldots
\end{split}
\end{equation}

We write $d^\prime=\varphi(d)/2$ and
\begin{equation}\label{eq12}
\Psi_d(X)=\sum_{n=0}^{d^\prime} a_{d, n}X^{d^\prime -n}, ~
\Xi_d(X)=\sum_{n=1}^{d^\prime} b_{d, n}X^{d^\prime -n}.
\end{equation}

Gauss-Kra\"{i}tchik formula gives representations of values of cyclotomic polynomials
as norms in quadratic fields and is useful for studies of arithmetic properties of such values
and the sum $\sigma(N)$ of divisors of $N$.
For example, in \cite{Ymd1} and \cite{Ymd2}, the author used Gauss' formula to obtain upper bounds
for odd perfect numbers of a certain form, where an upper bound for the ratio $\Xi_d(X)/\Psi_d(X)$ plays
an important role.

However, in contrast to the case of cyclotomic polynomials,
few results are known for coefficients of $\Psi_d(X)$ and $\Xi_d(X)$.
It is known that
$a_{d, 0}=2$ and $b_{d, 1}=1$.
Moreover, we have
$a_{d, n}=(-1)^{d^\prime} a_{d, d^\prime -n}$ for $0\leq n\leq d^\prime$
and $b_{d, n}=\pm b_{d, d^\prime-n}$, where the sign is minus if $d\equiv 3\Mod{4}$ is composite
and plus otherwise.

In this paper, we give upper bounds for coefficients of $\Psi_d(X)$ and $\Xi_d(X)$.

\begin{theorem}\label{th11}
Let $d\geq 5$ be an odd squarefree integer, $0\leq n\leq d^\prime$, and
$F=F_{d, n}=\max\left(\{\varphi(f)/2: f\mid d, 1<f\leq n\}\cup\{\abs{1+\sqrt{D}}/2\}\right)$.
Then,
\begin{equation}
\abs{a_{d, n}+b_{d, n}\sqrt{D}}\leq \frac{2F_{d, n}(F_{d, n}+1)\cdots (F_{d, n}+n-1)}{n!}
\end{equation}
and, putting $G_{d, n}=\max\left(\{\varphi(f)/2: f\mid d, 1<f\leq n\}\cup\{\abs{1+\sqrt{d}}/2\}\right)$,
\begin{equation}
\abs{a_{d, n}}+\abs{b_{d, n}}\sqrt{d}\leq \frac{2G_{d, n}(G_{d, n}+1)\cdots (G_{d, n}+n-1)}{n!}.
\end{equation}
\end{theorem}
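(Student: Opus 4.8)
The plan is to recognise $\Psi_d(X)+\sqrt D\,\Xi_d(X)$ as one of the two conjugate irreducible factors (up to the leading factor $2$) of $\Phi_d$ over $\Q(\sqrt D)$ --- the Gaussian-period factorisation --- to evaluate the power sums of its roots as short Gauss/Ramanujan sums, and then to run the Girard--Newton recursion measured against a submultiplicative norm on $\Q(\sqrt D)$.

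First I would pin down the factorisation. Let $\tau=\sum_{\gcd(j,d)=1}\left(\frac{j}{d}\right)\zeta_d^j$ be the quadratic Gauss sum modulo $d$; since $d$ is odd, $\tau^2=\left(\frac{-1}{d}\right)d=D$, so $\sqrt D\in\{\tau,-\tau\}$ and $\Q(\sqrt D)=\Q(\tau)\subseteq\Q(\zeta_d)$. By \eqref{eq11}, $4\Phi_d(X)=\left(\Psi_d(X)+\tau\Xi_d(X)\right)\left(\Psi_d(X)-\tau\Xi_d(X)\right)$, and since $\deg\Psi_d=d^\prime>\deg\Xi_d$ each factor has leading coefficient $a_{d,0}=2$; dividing by $2$ turns each into a monic polynomial of degree $d^\prime=[\Q(\zeta_d):\Q(\sqrt D)]$ dividing $\Phi_d$. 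Any primitive $d$-th root of unity generates $\Q(\zeta_d)$ over $\Q(\sqrt D)$, so each such factor is the minimal polynomial over $\Q(\sqrt D)$ of one of its roots, hence irreducible, and its root set is a single orbit of $\mathrm{Gal}(\Q(\zeta_d)/\Q(\sqrt D))=\{b:\left(\frac{b}{d}\right)=1\}$, that is, one of the two classes $\{\zeta_d^j:\left(\frac{j}{d}\right)=\pm1\}$. Since $\sqrt D\in\{\tau,-\tau\}$, it follows that with $T$ a fixed one of those two classes,
\begin{equation}\label{sk1}
\Psi_d(X)+\sqrt D\,\Xi_d(X)=2\prod_{j\in T}\left(X-\zeta_d^j\right).
\end{equation}
Comparing the coefficient of $X^{d^\prime-n}$ and invoking \eqref{eq12} gives $a_{d,n}+b_{d,n}\sqrt D=2(-1)^n e_n$ for $0\leq n\leq d^\prime$, where $e_n$ is the $n$-th elementary symmetric function of the roots $\zeta_d^j$, $j\in T$. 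Hence it suffices to bound $N(e_n)$ for $N=\abs{\cdot}$ (the complex absolute value) and for $N=\norml{\cdot}$, where I put $\norml{a+b\sqrt D}:=\abs{a}+\abs{b}\sqrt d$ for $a,b\in\Q$ (well defined since $D$ is not a square); note that then $\norml{a_{d,n}+b_{d,n}\sqrt D}=\abs{a_{d,n}}+\abs{b_{d,n}}\sqrt d$.

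Next I would evaluate the power sums $p_k=\sum_{j\in T}\zeta_d^{jk}$. Separating the residues coprime to $d$ according to $\left(\frac{j}{d}\right)$ yields $p_k=\tfrac12 c_d(k)+\gamma_k\tau$, where $c_d(k)=\sum_{\gcd(j,d)=1}\zeta_d^{jk}$ is the Ramanujan sum, $\gamma_k\in\{0,\pm\tfrac12\}$, and $\gamma_k=0$ exactly when $\gcd(k,d)>1$; the nonvanishing case rests on the identity $\sum_j\left(\frac{j}{d}\right)\zeta_d^{jk}=\left(\frac{k}{d}\right)\tau$, which holds because $\left(\frac{\cdot}{d}\right)$ is a \emph{primitive} character modulo $d$ --- the one place the squarefreeness of $d$ is used. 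For squarefree $d$, $c_d(k)=\mu(d/g)\varphi(g)$ with $g=\gcd(d,k)$, so $\abs{c_d(k)}=\varphi(g)$. For $1\leq k\leq n\leq d^\prime<d$ one has $1\leq g<d$, and there are two cases. If $g=1$, then $c_d(k)=\mu(d)=\pm1$ and $p_k=\tfrac12(u+v\sqrt D)$ with $u,v\in\{\pm1\}$, so $\abs{p_k}\leq\abs{1+\sqrt D}/2$ (an equality when $D<0$, since $\sqrt D$ is then purely imaginary) and $\norml{p_k}=(1+\sqrt d)/2=\abs{1+\sqrt d}/2$. If $g>1$, then $\left(\frac{k}{d}\right)=0$, so $p_k=\tfrac12\mu(d/g)\varphi(g)\in\Q$ and $\abs{p_k}=\norml{p_k}=\varphi(g)/2$, a number of the form $\varphi(f)/2$ with $f=g\mid d$ and $1<f\leq n$. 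In all cases $\abs{p_k}\leq F_{d,n}$ and $\norml{p_k}\leq G_{d,n}$ for every $1\leq k\leq n$.

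Finally I would close with Girard--Newton. Each of the two norms $N$ on $\Q(\sqrt D)$ is $\Q$-homogeneous, satisfies the triangle inequality, and has $N(1)=1$; both are submultiplicative, for $\norml{\cdot}$ because $\norml{(a_1+b_1\sqrt D)(a_2+b_2\sqrt D)}=\abs{a_1a_2+Db_1b_2}+\abs{a_1b_2+a_2b_1}\sqrt d\leq(\abs{a_1}+\abs{b_1}\sqrt d)(\abs{a_2}+\abs{b_2}\sqrt d)$ since $\abs{D}=d$. Put $M=F_{d,n}$ if $N=\abs{\cdot}$ and $M=G_{d,n}$ if $N=\norml{\cdot}$, so $N(p_k)\leq M$ for $1\leq k\leq n$. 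I claim $N(e_m)\leq\binom{M+m-1}{m}=M(M+1)\cdots(M+m-1)/m!$ for $0\leq m\leq n$. This is clear for $m=0$, and the Girard--Newton identity $m\,e_m=\sum_{k=1}^m(-1)^{k-1}p_k e_{m-k}$ gives, by induction,
\begin{equation}\label{sk2}
N(e_m)\leq\frac1m\sum_{k=1}^m N(p_k)N(e_{m-k})\leq\frac Mm\sum_{j=0}^{m-1}\binom{M+j-1}{j}=\frac Mm\binom{M+m-1}{m-1}=\binom{M+m-1}{m},
\end{equation}
the last two equalities being the hockey-stick identity and $m\binom{M+m-1}{m}=M\binom{M+m-1}{m-1}$ (identities of polynomials in $M$). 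Taking $m=n$ and multiplying by $2$ gives $\abs{a_{d,n}+b_{d,n}\sqrt D}=2\abs{e_n}\leq 2\binom{F_{d,n}+n-1}{n}$ and $\abs{a_{d,n}}+\abs{b_{d,n}}\sqrt d=2\norml{e_n}\leq 2\binom{G_{d,n}+n-1}{n}$, as claimed.

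The step I expect to be the main obstacle is the first one: proving \eqref{sk1} with the correct normalisation, i.e.\ that the two Kra\"{i}tchik factors of $4\Phi_d$ are precisely the two Gaussian-period factors attached to the classes of $\left(\frac{\cdot}{d}\right)$. This is exactly what makes each $p_k$ a short Gauss/Ramanujan sum rather than an uncontrolled sum of $d^\prime$ roots of unity. The remaining ingredients are routine; the only subtlety is that $F_{d,n}$ and $G_{d,n}$ are nondecreasing in $n$, so that $N(p_k)\leq M$ may be used for all $k\leq n$ at once in \eqref{sk2}.
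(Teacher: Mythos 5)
Your argument is correct and its overall architecture coincides with the paper's: identify $\Psi_d(X)\pm\sqrt{D}\,\Xi_d(X)$, up to the leading factor $2$, with the two Gaussian-period factors $U_d^\pm(X)=\prod_{(k/d)=\pm1}(X-\zeta_d^k)$; evaluate the power sums of one residue class as $\tfrac12\bigl(c_d(k)+(k/d)\sqrt{D}\bigr)$ via quadratic Gauss and Ramanujan sums (this is exactly Lemma \ref{lm21}); and then convert the uniform bound $N(p_k)\leq M$ into $N(e_n)\leq\binom{M+n-1}{n}$ for a submultiplicative norm $N$. Where you genuinely diverge is in that last conversion, which is the paper's advertised novelty: the paper expands $e_n$ as a signed combination of products of power sums with \emph{positive} weights $w_{(e_1,\dots,e_k)}$ and evaluates the total weight via the generating-function identity $R_m(X,\dots,X)=\binom{X}{m}$ (Lemma \ref{lm31}, which also needs the algebraic independence of the $S_r$), whereas you run a direct induction on the Girard--Newton recursion $me_m=\sum_{k=1}^m(-1)^{k-1}p_k e_{m-k}$ and close it with the hockey-stick identity together with $m\binom{M+m-1}{m}=M\binom{M+m-1}{m-1}$, read as polynomial identities in $M$. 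The outputs are identical; your route is more elementary (no positivity of the $w$'s, no generating functions), while the paper's lemma isolates a reusable closed form for the weight sums $v_{m,k}$ that it also quotes later. Two small points in your favour: you verify submultiplicativity of $\norml{\cdot}$ explicitly, which the paper uses only tacitly, and your bookkeeping $a_{d,n}+b_{d,n}\sqrt{D}=2(-1)^n e_n$ keeps the factor $2$ of the theorem visible throughout (the paper's \eqref{eq42} momentarily drops it). Finally, the identification step you single out as the main obstacle is dissolved in the paper by fiat: $\Psi_d$ and $\Xi_d$ are \emph{defined} through $U_d^\pm$ and \eqref{eq11} is then verified, so your Galois-theoretic derivation, while correct, is needed only if one insists on starting from Kra\"{i}tchik's statement.
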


\begin{corollary*}
For an odd squarefree integer $d\geq 5$ and an integer $n$ with $0\leq n\leq d^\prime$,
we have
\begin{equation}\label{eq100}
\begin{split}
\abs{a_{d, n}+b_{d, n}\sqrt{d}}<\min \left\{\sqrt{\frac{2e^{1/(6(F_{d, n}+n))}}{e\pi n}}
\left(\frac{e(F_{d, n}+n-1)}{F_{d, n}-1}\right)^{F_{d, n}-\frac{1}{2}}, \right. & \\
\left. \sqrt{\frac{2e^{1/(6(F_{d, n}+n))}}{e\pi (F_{d, n}-1)}}
\left(\frac{e(F_{d, n}+n-1)}{n}\right)^{n+\frac{1}{2}}, 2^{F_{d, n}+n}\right\} & .
\end{split}
\end{equation}
\end{corollary*}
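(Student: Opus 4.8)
The plan is to reduce the corollary to one estimate for
\[
M:=\frac{2F(F+1)\cdots (F+n-1)}{n!}=\frac{2\,\Gamma(F+n)}{\Gamma(F)\,n!},
\]
which by Theorem~\ref{th11} dominates $\abs{a_{d,n}+b_{d,n}\sqrt{D}}$ (write $F=F_{d,n}$ throughout); it then suffices to bound $M$ by each of the three expressions in the minimum. Since $d\ge 5$ is odd and squarefree, $F\ge\abs{1+\sqrt{D}}/2>1$, the least value $\sqrt{2}$ being attained at $d=7$, so $a:=F-1>0$ and
\[
M=\frac{2\,\Gamma(a+n+1)}{\Gamma(a+1)\,\Gamma(n+1)},
\]
which is invariant under $a\leftrightarrow n$ (as $\binom{a+n}{n}=\binom{a+n}{a}$) --- exactly the symmetry exchanging the first and second bound of \eqref{eq100}, with $n$ and $F-1=a$ swapped.

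For the first two bounds I would insert the effective Stirling estimate $\Gamma(x+1)=\sqrt{2\pi x}\,(x/e)^{x}e^{\varepsilon(x)}$, $0<\varepsilon(x)<1/(12x)$ (valid for real $x>0$), using its upper form at $x=a+n$, Robbins' lower bound $n!>\sqrt{2\pi n}\,(n/e)^{n}e^{1/(12n+1)}$ at $x=n$, and merely $\varepsilon(a)>0$ at $x=a$. This gives
\[
M<\sqrt{2/\pi}\;\frac{(a+n)^{a+n+1/2}}{a^{a+1/2}\,n^{n+1/2}}\,\exp\!\left(\frac1{12(a+n)}-\frac1{12n+1}\right),
\]
and since $\frac1{12(a+n)}-\frac1{12(a+n+1)}=\frac1{12(a+n)(a+n+1)}<\frac1{12n+1}$ (because $a>0$ and $12n(n+1)>12n+1$ for $n\ge1$), the exponential factor is below $e^{1/(12(a+n+1))}=e^{1/(12(F+n))}$, which is responsible for the $e^{1/(6(F+n))}$ appearing under the square roots in \eqref{eq100}. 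Then, from the identity
\[
\frac{(a+n)^{a+n+1/2}}{a^{a+1/2}n^{n+1/2}}=\frac1{\sqrt{en}}\left(\frac{e(a+n)}{a}\right)^{a+1/2}\frac{(1+a/n)^{n}}{e^{a}}
\]
and the inequality $(1+a/n)^{n}\le e^{a}$, one reads off the first bound; grouping the $a$-part instead and using $(1+n/a)^{a}\le e^{n}$ gives the second by the $a\leftrightarrow n$ mirror image. (For $n=1$ both right-hand sides are finite and exceed $M=2F$ by inspection; for $n=0$ they are $+\infty$, so nothing is needed there.)

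For the third bound $M\le 2^{F+n}$: if the maximum defining $F_{d,n}$ is attained at some $\varphi(f)/2$, then $F\in\mathbb{Z}$ (for odd $f>1$, $\varphi(f)$ is even), and $M=2\binom{F+n-1}{n}\le 2\sum_{k}\binom{F+n-1}{k}=2^{F+n}$. Otherwise $F=\abs{1+\sqrt{D}}/2$; then $F+j\le\lfloor F\rfloor+j+1$ gives $M\le 2\binom{\lfloor F\rfloor+n}{n}$, and the central-binomial estimate $\binom{N}{k}\le\binom{N}{\lfloor N/2\rfloor}\le 2^{N}\sqrt{2/(\pi N)}$ yields $M\le 2^{\lfloor F\rfloor+n+1}\sqrt{2/(\pi(\lfloor F\rfloor+n))}$, which is $\le 2^{F+n}$ as soon as $(\lfloor F\rfloor+n)\,4^{F-\lfloor F\rfloor}\ge 8/\pi$; this holds because $\lfloor F\rfloor+n\ge 3$ when $\lfloor F\rfloor\ge 2$, and because $F-\lfloor F\rfloor\ge\sqrt{2}-1$ when $\lfloor F\rfloor=1$ (the only $d$ with $1<F<2$ being $5,7,11$).

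None of this is deep: the corollary is Theorem~\ref{th11} combined with Stirling's formula and a few elementary inequalities. The one place that demands genuine care is the bookkeeping of the Stirling remainder --- in particular, checking that the accumulated error exponent drops below $1/(12(F+n))$ and not merely below $1/(12(F+n-1))$ --- together with the small arithmetic verification underlying $M\le 2^{F+n}$ in the non-integral case; I expect the former to be the main (and still minor) nuisance.
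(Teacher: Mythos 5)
Your proposal is correct (up to the trivial $n=0$ case of the third bound, where one just needs $M=2<2^{F}$ since $F>1$; your justification ``$\lfloor F\rfloor+n\ge 3$'' silently assumes $n\ge 1$). For the first two bounds your route is essentially the paper's: both reduce to $M=2\Gamma(F+n)/(\Gamma(F)\Gamma(n+1))$ via Theorem \ref{th11} and apply an effective Stirling estimate to the three Gamma factors, obtaining the second bound from the first by the symmetry $F-1\leftrightarrow n$ (the paper phrases this as ``interchanging the roles of $F$ and $n+1$'' in its logarithmic computation \eqref{eq43}); your bookkeeping of the remainder via Robbins' $e^{1/(12n+1)}$ is slightly more careful than the paper's, which writes an equality where Stirling only gives an inequality and quotes the bound for $x>1$ even though $F-1$ can be less than $1$ (e.g.\ $d=7$). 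The genuine divergence is the third bound $2^{F+n}$: the paper gets it in one line from the generalized binomial series, asserting $\binom{\alpha}{n}<\sum_{k\ge 0}\binom{\alpha}{k}=2^{\alpha}$ for real $\alpha=F+n-1>0$, which glosses over the fact that for non-integral $\alpha$ the terms $\binom{\alpha}{k}$ with $k>\alpha+1$ alternate in sign, so ``one term is less than the sum'' is not immediate; your case split (integral $F$ handled by the ordinary binomial theorem, non-integral $F$ by rounding up to $\binom{\lfloor F\rfloor+n}{n}$ and invoking the central binomial coefficient estimate, with the small numerical check on $F-\lfloor F\rfloor$ for $d\in\{5,7,11\}$) is longer but airtight, and is arguably the version one would want if the paper's one-liner were challenged.
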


Our argument depends on general relations of symmetric functions
and evaluation of power sums of roots of unity.
Let
\begin{equation}\label{eq13}
U_d^\pm(X)=\prod_{1\leq k\leq d, (k/d)=\pm 1} (X-e^{2\pi i k/d}),
\end{equation}
where $(\cdot /d)$ denotes the Jaoobi symbol.
Then, for an odd squarefree integer $d$, we can write
\begin{equation}\label{eq14}
U_d^+(X)=\sum_{n=0}^{d^\prime} u_{d, n} X^{d^\prime -n}, ~
U_d^-(X)=\sum_{n=0}^{d^\prime} \overline{u_{d, n}} X^{d^\prime -n}
\end{equation}
with coefficients $u_{d, n}$ in $\Q(\sqrt{D})$, where $\overline z$ denote the algebraic conjugate of $z$
in $\Q(\sqrt{D})$ and
\begin{equation}
\Psi_d(X)=U_d^+(X)+U_d^-(X), ~ \Xi_d(X)=\frac{U_d^-(X)-U_d^+(X)}{\sqrt{D}}.
\end{equation}
Moreover, we can easily see that $u_{d, n}=a_{d, n}+b_{d, n}\sqrt{D}$.
These coefficients $u_{d, n}$ are the main objects of our study.
For example, if $d$ is an odd prime, then
\begin{equation}\label{eq15}
u_{d, 0}=1, u_{d, 1}=\frac{1-\sqrt{D}}{2},
\end{equation}
and
\begin{equation}\label{eq16}
u_{d, 2}=\frac{D+3}{8}-\frac{1+(2/d)\sqrt{D}}{4}
=\begin{cases}
\frac{(d+3)/4-\sqrt{d}}{2}, & d\equiv 1\Mod{8}, \\
\frac{-d+3}{8}, & d\equiv 3\Mod{8}, \\
\frac{d+3}{8}, & d\equiv 5\Mod{8}, \\
\frac{(-d+3)/4-\sqrt{d}}{2}, & d\equiv 7\Mod{8},
\end{cases}
\end{equation}
which can be confirmed using the Girard-Newton identity and the formula for quadratic Gauss sums,
as we do later.
In general, coefficients $u_{d, n}$ of $U_d^+(X)$ can be represented as elementary symmetric polynomials.
We can write such polynomials using power sums of roots of unity using the well-known Girard-Newton identity.
The identity itself is rather complicated to obtain our upper bounds (but we still need it in order to
determine signs of some quantities).
Instead, we prove another specialized but simple identity.
Combining with evaluation of power sums of roots of unity,
we can bound sizes of coefficients $u_{d, n}$.

One of our motivation is quantification of how representations of values of cyclotomic polynomials as norms in quadratic fields constrain the relative sizes of conjugate embeddings.
As we wrote before, the author used Gauss' formula to obtain upper bounds in \cite{Ymd1} and \cite{Ymd2}, 
for odd perfect numbers of a certain form, where an upper bound for the ratio $\Xi_d(X)/\Psi_d(X)$ plays
an important role.

For example, 
We also have an approximation for the ratio $\abs{\Xi_d(x)/\Psi_d(x)}$, which generalizes Lemma 2.3
and the note in the corrigendum of \cite{Ymd1}.
\begin{theorem}\label{th12}
Put $G_d=G_{d, \floor{\varphi(d)/4}}$.
If $d\geq 5$ is odd and squarefree and $x>2G_d$, then
\begin{equation}
\abs{\frac{\Xi_d(x)}{\Psi_d(x)}-\frac{1}{2x-\mu(d)}}
<\frac{x}{(2x-\mu(d))\sqrt{d}}\left(\left(1-\frac{1}{x}\right)^{-G_d}-1-\frac{G_d}{x}\right).
\end{equation}
\end{theorem}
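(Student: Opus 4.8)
The plan is to reduce Theorem~\ref{th12} to Theorem~\ref{th11} via a direct manipulation of the generating functions $U_d^\pm$. Writing $\Xi_d(x)/\Psi_d(x) = (U_d^-(x)-U_d^+(x))/(\sqrt{D}(U_d^+(x)+U_d^-(x)))$ and noting that for real $x$ the two factors $U_d^+(x)$ and $U_d^-(x)$ are complex conjugates in the usual sense (since the roots come in conjugate pairs $e^{\pm 2\pi i k/d}$), I would set $U_d^+(x) = R e^{i\theta}$, so that $\Psi_d(x) = 2R\cos\theta$ and $\Xi_d(x)\sqrt{D} = -2iR\sin\theta$ when $D<0$ (with the obvious real-field analogue when $D>0$); either way the ratio is governed by the ``tail'' of the expansion of $U_d^+(x)/x^{d'}$ in powers of $1/x$. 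Concretely, $U_d^+(x) = x^{d'}\sum_{n\ge 0} u_{d,n} x^{-n}$, and the first two coefficients $u_{d,0}=1$, $u_{d,1} = (1-\sqrt{D})/2$ (recorded in \eqref{eq15}, and valid for all odd squarefree $d$ since $u_{d,1} = -\sum_{(k/d)=1} e^{2\pi i k/d}$ is half of $-\sum_{k}(k/d)e^{2\pi i k/d} \mp \tfrac12\cdot(\text{something})$ — more cleanly, $u_{d,1}+\overline{u_{d,1}} = a_{d,1}\cdot 2$ equals the coefficient of $X^{d'-1}$ in $\Psi_d$, which is known to be $-\mu(d)$ up to the normalization; I will pin this down). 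The point is that $2x - \mu(d)$ is exactly $2x + 2a_{d,1} = \Psi_d$-level contribution from the first two terms, so $1/(2x-\mu(d))$ is the ``two-term truncation'' of the ratio.

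The key steps, in order: (1) express $\Xi_d(x)/\Psi_d(x) = \big(\sum_n b_{d,n}x^{-n}\big)/\big(\sum_n a_{d,n}x^{-n}\big)\cdot x^{-1}$-type ratio after factoring out $x^{d'}$, using $u_{d,n} = a_{d,n}+b_{d,n}\sqrt{D}$; (2) isolate the main term by writing the ratio as $\frac{1}{2x-\mu(d)} + (\text{error})$, where the error is a rational function whose numerator is a convolution of the tails $\sum_{n\ge 2} a_{d,n}x^{-n}$ and $\sum_{n\ge 2} b_{d,n}x^{-n}$ against the leading behaviour; (3) bound $|a_{d,n}|$ and $|b_{d,n}|$ for $n\ge 2$ using the second inequality of Theorem~\ref{th11}, namely $|a_{d,n}|+|b_{d,n}|\sqrt{d} \le 2\binom{G_{d,n}+n-1}{n}$, and observe that for $n\ge 2$ we have $G_{d,n} = \max\{\varphi(f)/2 : f\mid d,\ 1<f\le n\}\cup\{|1+\sqrt d|/2\} \le G_d$ when $n \le \lfloor\varphi(d)/4\rfloor$ (monotonicity of $G_{d,n}$ in $n$, plus the fact that $G_d = G_{d,\lfloor\varphi(d)/4\rfloor}$ dominates); (4) sum the resulting geometric-type series: $\sum_{n\ge 2}\binom{G+n-1}{n}x^{-n} = (1-1/x)^{-G} - 1 - G/x$, which is precisely the expression appearing on the right-hand side of Theorem~\ref{th12}, and divide by the $|\Psi_d(x)| \asymp |2x-\mu(d)|$ denominator and by the $\sqrt{d}$ coming from $\Xi_d = (U_d^- - U_d^+)/\sqrt{D}$.

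The main obstacle I anticipate is step~(2)–(3): controlling the error term uniformly requires that the denominator $\Psi_d(x)$ does not get too small, i.e. that $|2x - \mu(d)| - (\text{tail of }\Psi_d)$ stays positive and comparable to $2x-\mu(d)$, which is exactly where the hypothesis $x > 2G_d$ enters — it forces $\sum_{n\ge 2}\binom{G_d+n-1}{n}x^{-n}$ to be a genuinely small perturbation (since $(1-1/x)^{-G_d}$ converges and its tail is $O(G_d^2/x^2)$ for $x$ large relative to $G_d$), so that the geometric series manipulations are legitimate and the final bound is clean. A secondary subtlety is that the summation index in Theorem~\ref{th11} only runs up to $d' = \varphi(d)/2$, while here I am summing the tail of a ratio that a priori has terms beyond that; but $U_d^+$ has degree exactly $d'$, so $u_{d,n} = 0$ for $n > d'$ and the series is in fact a polynomial in $1/x$ — I will just note this and proceed. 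I also need the elementary inequality that $|\Xi_d(x)/\Psi_d(x) - 1/(2x-\mu(d))|$, after clearing denominators, has the claimed form; this is bookkeeping once the coefficient bounds and the generating-function identity for $\sum\binom{G+n-1}{n}t^n$ are in hand, and requires no new idea beyond the triangle inequality applied termwise.
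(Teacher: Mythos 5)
Your overall strategy coincides with the paper's: expand $\Psi_d$ and $\Xi_d$ in powers of $1/x$, identify $1/(2x-\mu(d))$ as the two-term truncation, bound the tail coefficients via Theorem~\ref{th11}, and sum the negative binomial series $\sum_{n\ge 2}\binom{G+n-1}{n}x^{-n}=(1-1/x)^{-G_d}-1-G_d/x$. However, there is a genuine gap in your step~(3). The tail sums run over $2\le n\le d^\prime=\varphi(d)/2$, but your monotonicity argument only yields $G_{d,n}\le G_d=G_{d,\floor{\varphi(d)/4}}$ for $n\le\floor{\varphi(d)/4}$, and on the upper half of the range the inequality can genuinely fail: $d$ may have divisors $f$ with $\floor{\varphi(d)/4}<f\le n$ and $\varphi(f)/2>G_d$. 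For example, for $d=105$ one has $G_{105,12}=(1+\sqrt{105})/2\approx 5.62$, whereas $21\mid 105$ gives $G_{105,21}=\varphi(21)/2=6>G_{105,12}$, so for $21\le n\le 24=d^\prime$ Theorem~\ref{th11} does \emph{not} give $\abs{a_{d,n}}+\abs{b_{d,n}}\sqrt{d}\le 2\binom{G_d+n-1}{n}$, and nothing in your argument recovers that bound for those indices. The missing ingredient is the palindromic symmetry $u_{d,d^\prime-n}=\pm u_{d,n}$ or $\pm\overline{u_{d,n}}$, which transfers the bound from the index $d^\prime-n\le\floor{\varphi(d)/4}$ to the index $n$; this is precisely how the paper extends \eqref{eq51} to all $0\le n\le d^\prime$. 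The ``secondary subtlety'' you flag (indices $n>d^\prime$) is vacuous, since $U_d^\pm$ has degree $d^\prime$, and it is not the real issue.

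A lesser point concerns your step~(2). If you clear denominators and apply the triangle inequality termwise to $(2x^{d^\prime}-\mu(d)x^{d^\prime-1})\Xi_d(x)-x^{d^\prime-1}\Psi_d(x)$, the coefficients $\abs{a_{d,n}}$ and $\abs{b_{d,n}}$ enter with weights that do not match the combination $\abs{a_{d,n}}+\abs{b_{d,n}}\sqrt{d}$ controlled by Theorem~\ref{th11}, so the stated constant does not drop out directly. The paper instead starts from the crude bound \eqref{eq53}, in which $\sum_{n\ge 2}\abs{a_{d,n}}x^{d^\prime-n}$ is subtracted in the denominator, verifies that this fraction is less than $1$ (this is where $x>2G_d$ enters, exactly as you anticipated), and then adds $\sum_{n\ge 2}\abs{a_{d,n}}x^{d^\prime-n}$ to both numerator and denominator; this mediant step is what makes the $L^1$-norm $\norml{u_{d,n}}$ appear cleanly and produces the factor $x/((2x-\mu(d))\sqrt{d})$. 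You should either adopt that device or accept a weaker constant from the split bounds $\abs{a_{d,n}}\le 2\binom{G_d+n-1}{n}$ and $\abs{b_{d,n}}\le 2\binom{G_d+n-1}{n}/\sqrt{d}$.
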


This theorem gives an error term of order $O(1/(x^2\sqrt{d}))$
to clarify how representations of values of cyclotomic polynomials as norms in quadratic fields constrain the relative sizes of conjugate embeddings.
We note that it would be possible to have an approximate formula with an error term of order $O(1/(x^n\sqrt{d}))$
for a positive integer $n$ if we have a formula of $u_{d, k}$ like \eqref{eq15} and \eqref{eq16}
for each $k\leq n-1$, which would be useful to study other diophantine and arithmetic problems
involving cyclotomic values or integers of special forms such as $(x^m-1)/(x-1)$ and $x^m-1$.
However, in order to obtain such formulae, we must process several cases according to values of
quadratic residue symbols $(k/d)$ over $k=2, \ldots, n-1$ and the main term would be rather complicated.

\section{Power sums of roots of unity}

We write
$g_{d, k}=\sum_{1\leq a\leq d, \gcd(a, d)=1} (a/d)\zeta_d^{ka}$ for the quadratic Gauss sum,
$h_{d, k}=\sum_{1\leq a\leq d, \gcd(a, d)=1} \zeta_d^{ka}$
for the power sum of primitive $d$-th roots of unity,
and $s_{d, k}=\sum_{1\leq a\leq d, (a/d)=1} \zeta_d^{ka}$.
From Chinese Remainder Theorem, we see that the sum $h_{d, k}$ is multiplicative over integers $d$
and the sum $g_{d, k}$ is ``almost'' multiplicative over integers $d$
in the sense that $h_{dm, k}=h_{d, k}h_{m, k}$ and $g_{dm, k}=\pm g_{d, k}g_{m, k}$ whenever $\gcd(m, d)=1$.
Indeed, we have $g_{dm, k}=(d/m)(m/d) g_{d, k}g_{m, k}=(-1)^{(d-1)(m-1)/4} g_{d, k}g_{m, k}$.

\begin{lemma}\label{lm21}
Assume that $d\geq 3$ is odd and squarefree.
If $\gcd(k, d)=1$, then $s_{d, k}=(\mu(d)+(k/d)\sqrt{D})/2$.
If $\gcd(k, d)=f>1$, then, $s_{d, k}=\mu(d/f)\varphi(f)/2$.
\end{lemma}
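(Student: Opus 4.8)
The plan is to split the sum $s_{d,k}=\sum_{1\leq a\leq d,\,(a/d)=1}\zeta_d^{ka}$ according to whether $\gcd(k,d)=1$ or not, and to relate it in each case to the Gauss sum $g_{d,k}$ and the primitive-root power sum $h_{d,k}$, both of which are classically computable. The starting observation is the identity
\begin{equation}
s_{d,k}=\tfrac{1}{2}\bigl(h_{d,k}+g_{d,k}\bigr),
\end{equation}
valid for any odd squarefree $d$: since $d$ is squarefree, every $a$ with $\gcd(a,d)=1$ has $(a/d)=\pm1$, so $h_{d,k}=\sum_{(a/d)=1}\zeta_d^{ka}+\sum_{(a/d)=-1}\zeta_d^{ka}$ while $g_{d,k}=\sum_{(a/d)=1}\zeta_d^{ka}-\sum_{(a/d)=-1}\zeta_d^{ka}$, and adding these and halving isolates the quadratic-residue part. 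Thus the lemma reduces to evaluating $h_{d,k}$ and $g_{d,k}$.

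For $h_{d,k}$ I would use the standard fact (Ramanujan sum) that $h_{d,k}=\mu(d/\gcd(k,d))\,\varphi(d)/\varphi(d/\gcd(k,d))$; in particular $h_{d,k}=\mu(d)$ when $\gcd(k,d)=1$, and $h_{d,k}=\mu(d/f)\varphi(d)/\varphi(d/f)=\mu(d/f)\varphi(f)$ when $\gcd(k,d)=f>1$ — here I use multiplicativity of $h$ noted in the text together with $h_{p,k}=-1$ for $p\mid k$ is false, rather $h_{p,k}=p-1$ when $p\mid k$ and $h_{p,k}=-1$ when $p\nmid k$, which recombines to the Ramanujan-sum formula. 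For $g_{d,k}$ the key point is that the Gauss sum essentially vanishes as soon as $\gcd(k,d)>1$: writing $d=ef$ with $f=\gcd(k,d)$ and using the ``almost multiplicativity'' $g_{d,k}=\pm g_{e,k}g_{f,k}$ recorded just before the lemma, one has $g_{f,k}=\sum_{(a/f)=1,\gcd(a,f)=1}(a/f)\zeta_f^{(k/f)\cdot a\cdot(\text{unit})}$... more directly, for a prime $p\mid k$ one gets $g_{p,k}=\sum_{a=1}^{p-1}(a/p)=0$, hence $g_{d,k}=0$ whenever $\gcd(k,d)>1$. When $\gcd(k,d)=1$, the classical evaluation gives $g_{d,k}=(k/d)g_{d,1}$ with $g_{d,1}^2=D=(-1)^{(d-1)/2}d$, and the sign of the square root is pinned down by Gauss's theorem so that $g_{d,1}=\sqrt{D}$ in the sense that $g_{d,1}=\sqrt{d}$ for $d\equiv1\pmod 4$ and $i\sqrt{d}$ for $d\equiv3\pmod4$; this is exactly $\sqrt{D}$. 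Substituting: for $\gcd(k,d)=1$, $s_{d,k}=\tfrac12(\mu(d)+(k/d)\sqrt{D})$; for $\gcd(k,d)=f>1$, $s_{d,k}=\tfrac12(\mu(d/f)\varphi(f)+0)=\mu(d/f)\varphi(f)/2$, as claimed.

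The main obstacle — really the only nontrivial ingredient — is the precise sign in the quadratic Gauss sum evaluation, i.e. establishing $g_{d,1}=\sqrt{D}$ with the correct branch, which is Gauss's celebrated sign determination; since the paper explicitly allows invoking classical results and the text already cites "the formula for quadratic Gauss sums," I would simply quote it rather than reprove it. A minor secondary point requiring care is the squarefree hypothesis: it guarantees every residue class coprime to $d$ has a well-defined value $\pm1$ under $(\cdot/d)$ and that the Ramanujan-sum/Gauss-sum multiplicativity applies cleanly, so I would state at the outset that squarefreeness is used exactly to make the decomposition $h_{d,k}=s_{d,k}+\overline{s_{d,k}}$-type splitting and the prime factorization $d=ef$ behave well. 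Everything else is bookkeeping: assembling the two cases and checking that $\mu(d/f)\varphi(f)/2$ is indeed the stated closed form (note it is an integer, consistent with $s_{d,k}$ being a rational integer when $\gcd(k,d)>1$, a useful sanity check).
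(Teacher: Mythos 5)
Your proposal is correct and follows essentially the same route as the paper: both decompose $s_{d,k}$ as $\tfrac12(h_{d,k}+g_{d,k})$, evaluate $h_{d,k}$ as a Ramanujan sum equal to $\mu(d/f)\varphi(f)$, kill $g_{d,k}$ via (almost-)multiplicativity when $\gcd(k,d)>1$, and invoke Gauss's sign determination $g_{d,1}=\sqrt{D}$ in the coprime case. The only cosmetic difference is that the paper handles $\gcd(k,d)=1$ by reindexing $a\mapsto ka$ to get $s_{d,k}=s_d^{(k/d)}$ rather than quoting $g_{d,k}=(k/d)g_{d,1}$ directly, which is the same substitution in different form.
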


\begin{proof}
Let $s_d^\pm=\sum_{\gcd(a, d)=1, (a/d)=\pm 1}\zeta_d^a$.
We can easily see that $h_{d, 1}=s_d^+ + s_d^-$ and $g_{d, 1}=s_d^+ - s_d^-$.
Hence, we obtain
\begin{equation}\label{eq21}
s_d^{\pm} = \frac{h_{d, 1} \pm g_{d, 1}}{2}=\frac{\mu(d)\pm\sqrt{D}}{2},
\end{equation}
where we apply the well-known evaluation of quadratic Gauss sums to $g_{d, 1}$
(see for example Theorem 3.3 of \cite{IK})
We note that we can evaluate $g_{d, 1}$ using Theorem 1 of Chapter 6 of \cite{IR}
observing that $g_{d, 1}$ is multiplicative over integers $d$ as noted above.

If $\gcd(k, d)=1$, then $s_{d, k}=\sum_{(a/d)=1}\zeta_d^{ka}=\sum_{(a/d)=(k/d)}\zeta_d^a$
and hence $s_{d, k}=s_d^{(k/d)}=(\mu(d)+(k/d)\sqrt{D})/2$ as desired.

Now we assume that $\gcd(k, d)=f>1$.
We see that $g_{d, k}=\pm g_{f, k}g_{d/f, k}=0$ since $g_{f, k}=0$.
Hence, we obtain
\begin{equation}
s_{d, k}=\frac{g_{d, k}+h_{d, k}}{2}=\frac{h_{d, k}}{2}=\frac{h_{d/f, k}h_{f, k}}{2}=\frac{\mu(d/f)\varphi(f)}{2},
\end{equation}
as desired.
\end{proof}

For example, we see that $u_{d, 1}=-s_{d, 1}=(1-g_{d, 1})/2$ if $d$ is an odd prime, which gives \eqref{eq15}.

\section{Elementary symmetric polynomials and power sums polynomials}

In this section, we consider symmetric functions of an infinite number of variables rather than
a finite number of variables.
For a positive integer $m$, the $m$-th elementary symmetric polynomial of an infinite number of variables
and the $m$-th power sum polynmial of an infinite number of variables are given by
\begin{equation}
S^{(m)}=\sum_{1\leq i_1<i_2<\cdots<i_m} x_{i_1}x_{i_2}\cdots x_{i_m}, ~ S_m=\sum_{i=1}^\infty x_i^m
\end{equation}
respectively.
Moreover, we define $S^{(0)}=1$.

The Girard-Newton identity
\begin{equation}
mS^{(m)}=S^{(m-1)}S_1-S^{(m-2)}S_2+\cdots +(-1)^{m-1}S_m
\end{equation}
is well-known and can be found in $(2.11^\prime)$ of \cite{Mac}
and Section 3.4 of \cite{Esc}.
Mead \cite{Mea} gave a straightforward proof, which is given in Proposition 12.6 of \cite{Mor}.
Min\'{a}\v{c} \cite{Min} gave another proof.
Boklan \cite{Bok} gave another way to derive the Girard-Newton identity only based on
single variable differentiation.
For example, this identity gives that
\begin{equation}
u_{d, 2}=\frac{s_{d, 1}^2-s_{d, 2}}{2},
\end{equation}
which gives \eqref{eq16} combined with Lemma \ref{lm21} if $d$ is an odd prime.

It is also known that
\begin{equation}
S^{(m)}=(-1)^m\sum_{m_1+2m_2+\cdots +\ell m_\ell=m} \prod_{j=1}^\ell \frac{(-1)^{m_j} S_j^{m_j}}{m_j! j^{m_j}},
\end{equation}
which can be found in $(4)$ of \cite{Bok} and is an equivalent form of $(2.14^\prime)$ of \cite{Mac} and $(7.23)$ of \cite{Sta}.

While these identies themselves are rather complicated,
we can prove a surprisingly simple identity for the special polynomial $R_m(X, X, \ldots, X)$,
which we really use in our proof.
Writing
\begin{equation}
S^{(m)}=R_m(S_1, \ldots, S_m)=
\sum_{\substack{e_1\leq e_2\leq \cdots \leq e_k, \\e_1+\cdots +e_k=m}}
\left((-1)^{m-k} w_{(e_1, e_2, \ldots, e_k)} \prod_{i=1}^k S_{e_i}\right)
\end{equation}
with a polynomial $R_m(X_1, \ldots, X_m)$, the Girard-Newton identity gives that 
\begin{equation}
w_{(e_1, e_2, \ldots, e_{k+1})}=\frac{1}{m}\sum_{i=1}^{k+1}w_{(e_1, e_2, \ldots, e_{k+1})\backslash e_i}
\end{equation}
are positive rational numbers,
where $(e_1, e_2, \ldots, e_{k+1})\backslash e_i$ is the $k$-tuple of $e_j$'s with $1\leq j\leq k+1$ but $j\neq i$.
Moreover, putting 
$v_{m, k}=\sum_{e_1+\cdots +e_k=m}w_{(e_1, e_2, \ldots, e_k)}$
for integers $m\geq k\geq 1$, we can easily see that
\begin{equation}
P_m(X)=\sum_{k=0}^m (-1)^{m-k} v_{m, k} X^k=R_m(X, \ldots, X)
\end{equation}
is the polynomial obtained by replacing all $S_i$'s in $S^{(m)}$ by $X$.

Our key tool is the following identity.
\begin{lemma}\label{lm31}
For each $m=1, 2, \ldots$, we have $P_m(X)=\binom{X}{m}=X(X-1)\cdots (X-m+1)/m!$.
\end{lemma}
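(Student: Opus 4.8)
The plan is to pass to generating functions in an auxiliary variable $t$ and exploit the exponential form of the Girard-Newton identities. Working in the formal power series ring $\Q[S_1,S_2,\ldots][[t]]$, one has the classical identity
\[
\sum_{m\geq 0} S^{(m)}t^m=\exp\left(\sum_{k\geq 1}\frac{(-1)^{k-1}}{k}S_k t^k\right),
\]
whose coefficient of $t^m$ is exactly the polynomial expression $S^{(m)}=R_m(S_1,\ldots,S_m)$. If one wishes to prove this identity from scratch rather than cite it, it follows from taking the logarithm of $\prod_i(1+x_i t)=\sum_{m\geq 0}S^{(m)}t^m$, namely $\sum_i\log(1+x_i t)=\sum_i\sum_{k\geq 1}\frac{(-1)^{k-1}}{k}(x_i t)^k=\sum_{k\geq 1}\frac{(-1)^{k-1}}{k}S_k t^k$, and exponentiating; since each coefficient on the right is a polynomial in $S_1,\ldots,S_m$ with rational coefficients, the identity is valid in $\Q[S_1,S_2,\ldots][[t]]$ regardless of the number of variables.

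The key step is then to apply the $\Q$-algebra homomorphism $\Q[S_1,S_2,\ldots]\to\Q[X]$ that sends every $S_k$ to $X$, extended coefficientwise to power series in $t$. By the very definition of $P_m$, this homomorphism sends $S^{(m)}=R_m(S_1,\ldots,S_m)$ to $R_m(X,\ldots,X)=P_m(X)$, so the left-hand side becomes $\sum_{m\geq 0}P_m(X)t^m$. On the right-hand side the exponent becomes $X\sum_{k\geq 1}\frac{(-1)^{k-1}}{k}t^k=X\log(1+t)$, so the right-hand side becomes $\exp(X\log(1+t))=(1+t)^X=\sum_{m\geq 0}\binom{X}{m}t^m$. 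Comparing the coefficients of $t^m$ on both sides yields $P_m(X)=\binom{X}{m}$ for all $m$, which is the assertion.

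The only point requiring care is the bookkeeping that the substitution $S_k\mapsto X$ commutes with the generating-function identity, and this is automatic once the ambient ring is fixed as above, so I do not expect a genuine obstacle. If one prefers an argument that avoids generating functions, the same statement follows by induction on $m$ directly from the scalar Girard-Newton recursion: substituting $S_k=X$ into $mS^{(m)}=\sum_{i=1}^m(-1)^{i-1}S^{(m-i)}S_i$ gives $mP_m(X)=X\sum_{i=1}^m(-1)^{i-1}P_{m-i}(X)$, and plugging in the inductive hypothesis $P_{m-i}(X)=\binom{X}{m-i}$ together with the elementary telescoping identity $\sum_{j=0}^{m-1}(-1)^j\binom{X}{j}=(-1)^{m-1}\binom{X-1}{m-1}$ collapses the right-hand side to $X\binom{X-1}{m-1}=m\binom{X}{m}$; the base cases $P_0(X)=1$ and $P_1(X)=X$ are immediate. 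In either approach the crux is the observation that the ``all power sums equal $X$'' specialization turns the exponential generating function into the binomial series $(1+t)^X$.
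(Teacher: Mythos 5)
Your proposal is correct and follows essentially the same route as the paper: both pass to the generating function identity $\sum_m S^{(m)}t^m=\exp\bigl(\sum_k (-1)^{k-1}S_k t^k/k\bigr)$, justify the specialization $S_k\mapsto X$ via the algebraic independence of the power sums (which you phrase as a $\Q$-algebra homomorphism on $\Q[S_1,S_2,\ldots]$), and identify the result as $(1+t)^X=\sum_m\binom{X}{m}t^m$. Your additional inductive argument via the Girard--Newton recursion is a valid alternative but not needed.
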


\begin{proof}
We begin with
\begin{equation}
\log \prod_{i=1}^\infty (1+x_i T)=\sum_{r=1}^\infty \frac{(-1)^{r-1} S_r T^r}{r}.
\end{equation}
We clearly have $\sum_{m=0}^\infty S^{(m)} T^m=\prod_{i=1}^\infty (1+x_i T)$
and therefore
\begin{equation}\label{eq31}
\sum_{m=0}^\infty R_m(S_1, \ldots, S_m) T^m=\exp \sum_{r=1}^\infty \frac{(-1)^{r-1} S_r T^r}{r}.
\end{equation}
As is well known, $S_1, \ldots, S_m$ are algebraically independent
(see $(2.12)$ of \cite{Mac} or Corollary 7.7.2 of \cite{Sta}).
Hence, the polynomial $P_m(X)=R_m(X, \ldots, X)$ coincides with the coefficient of $T^m$
in the expansion of the right of \eqref{eq31} with $X$ in place of each $S_r$.
Thus, we see that
\begin{equation}
\sum_{n=0}^\infty P_n(X) T^n=\exp\left(-X \sum_{r=1}^\infty \frac{(-T)^r}{r}\right)
=\sum_{m=0}^\infty \binom{X}{m}T^m,
\end{equation}
which gives the Lemma.
\end{proof}

\section{Proof of the Theorems}

We begin by confirming basic properties of $\Psi_d(X)$ and $\Xi_d(X)$.
We put $\norml{\alpha +\beta\sqrt{D}}=\abs{\alpha}+\abs{\beta}\sqrt{d}$ be
the $L^1$-norm of an algebraic number $\alpha +\beta\sqrt{D}$ in $\Q(\sqrt{D})$ with $\alpha$, $\beta\in\Q$.
Let $U_d^\pm(X)$ be polynomials defined by \eqref{eq13} and expand as in \eqref{eq14}.
Clearly, we have $u_{d, 0}=1$ and,
from Lemma \ref{lm21}, $u_{d, 1}=s_{d, 1}=(\mu(d)+\sqrt{D})/2$.

Let $\Psi_d(X)=U_d^+(X) + U_d^-(X)$, $\Xi_d(X)=(U_d^-(X)-U_d^+(X))/\sqrt{D}$, and expand as in \eqref{eq12}.
Then, $\Psi_d(X)$ and $\Xi_d(X)$ are polynomials of degrees $d^\prime$ and $d^\prime -1$ respectively.
Moreover, we have
\begin{equation}
4\Phi_d(X)=4U_d^+U_d^-=(U_d^+ + U_d^-)^2-(U_d^- - U_d^+)^2=\Psi_d^2-D\Xi_d^2
\end{equation}
to confirm \eqref{eq11}.

Using notation in the previous section, we have
\begin{equation}\label{eq41}
\begin{split}
u_{d, n}= & ~ (-1)^n S^{(n)}(\zeta_d^{t_1}, \ldots, \zeta_d^{t_{d^\prime/2}})=(-1)^n R_n(s_{d, 1}, \ldots, s_{d, n}) \\
= & ~ (-1)^n \sum_{k=1}^n \sum_{\substack{e_1\leq e_2\leq \cdots \leq e_k, \\ e_1+\cdots +e_k=n}}
(-1)^{n-k} w_{(e_1, e_2, \ldots, e_k)} \prod_{i=1}^k s_{d, e_i},
\end{split}
\end{equation}
where $t_1, \ldots, t_{d^\prime/2}$ denote all integers $t$ with $1\leq t\leq d-1$ and $(t/d)=1$
and $S^{(n)}(\zeta_d^{t_1}, \ldots, \zeta_d^{t_{d^\prime/2}})$ is the value of $S^{(n)}$
evaluated at $x_i=\zeta_d^{t_i}$ for $i=1, \ldots, d^\prime/2$ and $x_i=0$ for $i>d^\prime/2$.
Since each $s_{d, k}$ belongs to $\Q(\sqrt{D})$ as is seen from Lemma \ref{lm21},
each $u_{d, n}$ also belongs to $\Q(\sqrt{D})$ by \eqref{eq41}.

Now we see that both
$a_{d, n}=u_{d, n}+\overline{u_{d, n}}$ and $b_{d, n}=(\overline{u_{d, n}}-u_{d, n})/\sqrt{D}$
are integers for $n=0, \ldots, d^\prime$.
Hence, $\Psi_d(X)$ and $\Xi_d(X)$ are polynomials of integer coefficients.
Moreover, these polynomials have the leading coefficients
$a_{d, 0}=u_{d, 0}+\overline{u_{d, 0}}=2$ and $b_{d, 1}=(\overline{u_{d, 1}}-u_{d, 1})/(2\sqrt{D})=1$ respectively.

Now we prove Theorem \ref{th11}.
As is noted in the last section, $w_{(e_1, e_2, \ldots, e_k)}$'s are all positive.
Hence, \eqref{eq41} immediately gives that
\begin{equation}
\abs{\overline{u_{d, n}}}\leq \sum_{k=0}^n v_{n, k} X_n^k,
\end{equation}
where $X_n=\max_{1\leq k\leq n}\abs{\overline{s_{d, k}}}$, and Lemma \ref{lm31} yields that
\begin{equation}
\abs{\overline{u_{d, n}}}\leq \frac{X_n(X_n+1)\cdots (X_n+n-1)}{n!}.
\end{equation}

It immediately follows from Lemma \ref{lm21} that
$\abs{\overline{s_{d, k}}}\leq \abs{1+\sqrt{D}}/2$ if $\gcd(d, k)=1$ and
$\abs{s_{d, k}}=\varphi(f)/2$ if $\gcd(d, k)=f>1$,
Hence, we have $X_n\leq F_{d, n}$ and
\begin{equation}\label{eq42}
\abs{a_{d, n}+b_{d, n}\sqrt{D}}=\abs{\overline{u_{d, n}}}
\leq \frac{F_{d, n}(F_{d, n}+1)\cdots (F_{d, n}+n-1)}{n!}.
\end{equation}
Similarly, we have $\norml{s_{d, k}}\leq G_{d, n}$ for $1\leq k\leq n$ and
\begin{equation}
\norml{a_{d, n}+b_{d, n}\sqrt{D}}=\norml{u_{d, n}}
\leq \frac{G_{d, n}(G_{d, n}+1)\cdots (G_{d, n}+n-1)}{n!},
\end{equation}
proving Theorem \ref{th11}.

Now we prove the Corollary.
Jameson's explicit version of Stirling's formula in \cite{Jam} gives that
\begin{equation}
\sqrt{2\pi}x^{x+1/2}e^{-x}<\Gamma(x+1)<\sqrt{2\pi}x^{x+1/2}e^{-x+1/(12x)}
\end{equation}
for $x>1$.
Hence, we obtain
\begin{equation}\label{eq43}
\begin{split}
\log \frac{\Gamma(F+n)}{\Gamma(n+1)\Gamma(F)}
= ~ &\left(F-\frac{1}{2}\right)\log\frac{F+n-1}{F-1}+n\log\left(1+\frac{F-1}{n}\right) \\
~ & -\frac{\log(2\pi n)}{2}+\frac{1}{12(F+n)} \\
< ~ & \left(F-\frac{1}{2}\right)\left(1+\log\frac{F+n-1}{F-1}\right) \\
~ & -\frac{1+\log(2\pi n)}{2}+\frac{1}{12(F+n)}.
\end{split}
\end{equation}
for $F>1$.
From \eqref{eq42}, we have
\begin{equation}
\abs{u_{d, n}}\leq \frac{\Gamma((F_{d, n}+n)}{\Gamma(F_{d, n})\Gamma(n+1)}
<\frac{e^{1/(12(F_{d, n}+n))}}{\sqrt{2e\pi n}}
\left(\frac{e(F_{d, n}+n-1)}{F_{d, n}-1}\right)^{F_{d, n}-\frac{1}{2}},
\end{equation}
which gives the first bound in \eqref{eq100}.
Interchanging roles of $F$ and $n+1$ in \eqref{eq43}, we obtain the second bound in \eqref{eq100}.
Observing that
\begin{equation}
\frac{\alpha (\alpha -1)\cdots (\alpha -n+1)}{n!}x^k<\sum_{k=0}^\infty \binom{\alpha}{k} x^k=(1+x)^\alpha
\end{equation}
for $\alpha, x>0$, we obtain $\abs{u_{d, n}}<2^{F_{d, n}+n-1}$ and then the last bound in \eqref{eq100}.
Now the Corollary follows.

Finally, we prove Theorem \ref{th12}.
Theorem \ref{th11} gives that
\begin{equation}\label{eq51}
\norml{u_{d, n}}\leq 2\binom{G_d+n-1}{n}
\end{equation}
for $0\leq n\leq \varphi(d)/4$.
Since $u_{d, d^\prime -n}=\pm u_{d, n}$ or $\pm \overline{u_{d, n}}$ for $n=0, \ldots, d^\prime$
and $u_{d, 0}=2$, \eqref{eq51} holds for $n=0, \ldots, d^\prime$.
Hence, we see that
\begin{equation}\label{eq52}
\begin{split}
\Psi_d(x)\geq & ~ 2x^{d^\prime}-\sum_{n=1}^{d^\prime}\abs{a_{d, n}}x^{d^\prime -n}
>2x^{d^\prime}\left(1-\sum_{n=1}^{d^\prime}\binom{G_d+n-1}{n} x^{-n}\right) \\
\geq & ~ 2x^{d^\prime}\left(1-\sum_{n=1}^{d^\prime}\left(\frac{G_d}{x}\right)^n\right).
\end{split}
\end{equation}
Since we have assumed that $x>2G_d$, we have $\Psi_d(x)>0$.

Now we see that
\begin{equation}\label{eq53}
\abs{\frac{\Xi_d(x)\sqrt{D}}{\Psi_d(x)}}
\leq \frac{\left(x^{d^\prime -1}+\sum_{n=2}^{d^\prime } \abs{b_{d, n}}x^{d^\prime -n}\right)\sqrt{d}}
{2x^{d^\prime} -\mu(d)x^{d^\prime -1}-\sum_{n=2}^{d^\prime} \abs{a_{d, n}} x^{d^\prime -n}},
\end{equation}
where we note that the denominator of the right-hand side is positive as in \eqref{eq52}.
With the aid of \eqref{eq51}, we have
\begin{equation}
(1+\sqrt{d})x^{d^\prime -1}+\sum_{n=2}^{d^\prime} \norml{u_{d, n}} x^{d^\prime -n}<2x^{d^\prime},
\end{equation}
and therefore
\begin{equation}
\left(x^{d^\prime -1}+\sum_{n=2}^{d^\prime} \abs{b_{d, n}}x^{d^\prime -n}\right)\sqrt{d}
<2x^{d^\prime} -\mu(d)x^{d^\prime -1}-\sum_{n=2}^{d^\prime} \abs{a_{d, n}} x^{d^\prime -n}.
\end{equation}
Hence, \eqref{eq53} yields that
\begin{equation}
\begin{split}
\abs{\frac{\Xi_d(x)\sqrt{D}}{\Psi_d(x)}}
\leq ~ & \frac{x^{d^\prime -1}\sqrt{d}+\sum_{n=2}^{d^\prime} \norml{u_{d, n}} x^{d^\prime -n}}
{2x^{d^\prime}-\mu(d)x^{d^\prime-1}} \\
= ~ & \frac{1}{2x-\mu(d)}\left(\sqrt{d}+\sum_{n=2}^{d^\prime} \frac{\norml{u_{d, n}}}{x^{n-1}}\right).
\end{split}
\end{equation}

Proceeding as above, we use \eqref{eq51} to obtain
\begin{equation}
\begin{split}
\abs{\frac{\Xi_d(x)\sqrt{D}}{\Psi_d(x)}}
< ~ & 
\frac{1}{2x-\mu(d)}\left(\sqrt{d}+\sum_{n=2}^{d^\prime} \binom{G_d+n-1}{n}\frac{1}{x^{n-1}}\right) \\
< ~ & \frac{1}{2x-\mu(d)}\left(\sqrt{d}+x\left(\left(1-\frac{1}{x}\right)^{-G_d}-1-\frac{G_d}{x}\right)\right),
\end{split}
\end{equation}
which is the desired upper bound.
The lower bound can be derived from the observation that
\begin{equation}
\abs{\frac{\Xi_d(x)\sqrt{D}}{\Psi_d(x)}}
\geq \frac{\left(x^{d^\prime -1}-\sum_{n=2}^{d^\prime} \abs{b_{d, n}}x^{d^\prime -n}\right)\sqrt{d}}
{2x^{d^\prime} -\mu(d)x^{d^\prime -1}+\sum_{n=2}^{d^\prime} \abs{a_{d, n}} x^{d^\prime -n}}
\end{equation}
in a similar way as above.
Thus Theorem \ref{th12} is proved.

{}
\end{document}